\renewcommand{\begin{picture}(0,0)%
\includegraphics{smallHamSandwich.pstex}%
\end{picture}%
\setlength{\unitlength}{4144sp}%
\begingroup\makeatletter\ifx\SetFigFont\undefined%
\gdef\SetFigFont#1#2#3#4#5{%
  \reset@font\fontsize{#1}{#2pt}%
  \fontfamily{#3}\fontseries{#4}\fontshape{#5}%
  \selectfont}%
\fi\endgroup%
\begin{picture}(204,171)(4489,-1584)
\end{picture}%
}{\begin{picture}(0,0)%
\includegraphics{smallHamSandwich.pstex}%
\end{picture}%
\setlength{\unitlength}{4144sp}%
\begingroup\makeatletter\ifx\SetFigFont\undefined%
\gdef\SetFigFont#1#2#3#4#5{%
  \reset@font\fontsize{#1}{#2pt}%
  \fontfamily{#3}\fontseries{#4}\fontshape{#5}%
  \selectfont}%
\fi\endgroup%
\begin{picture}(204,171)(4489,-1584)
\end{picture}%
}
\newtheorem{theorem}{Theorem}[section]
\newtheorem{lemma}[theorem]{Lemma}
\newtheorem{corollary}[theorem]{Corollary}
\newtheorem{remarks}[theorem]{Remarks}
\newcommand{\matousek}{Matou\v{s}ek}
\newcommand{\vrecica}{Vre\'{c}ica}
\newcommand{\zivaljevic}{\v{Z}ivaljevi\'{c}}
\newcommand{\RR}{\mathbbm{R}} %reals
\newcommand{\eps}{\varepsilon}
\begin{document}

\title[Note on Masspartitions by Hyperplanes]
  {A Note on Masspartitions by Hyperplanes}
\author{Benjamin Matschke}
\thanks{Supported by Studienstiftung des dt. Volkes and Deutsche Telekom Stiftung.}
\thanks{This small note is an extract of \cite[Chap. II]{Mat08}}
\date{Dec. 31, 2009}

\maketitle

%%%%%%%%%%%%%%%%%%%%%%%%%%%%%%%%%%%%%%%%%%%%%%%%%%%%%%%%%%%%%%%%%%%%%
\begin{abstract}
%%%%%%%%%%%%%%%%%%%%%%%%%%%%%%%%%%%%%%%%%%%%%%%%%%%%%%%%%%%%%%%%%%%%%
A triple of positive integers $(d,h,m)$ is \emph{admissible} if for any $m$ given masses in $\RR^d$ there exist $h$ hyperplanes that cut each of these masses into $2^h$ equal pieces. 
We present an elementary reduction which combined with results by Ramos (1996) yields all the admissible triples that were known up to now (with one exception) as well as new ones.
\end{abstract}

%%%%%%%%%%%%%%%%%%%%%%%%%%%%%%%%%%%%%%%%%%%%%%%%%%%%%%%%%%%%%%%%%%%%%
\section{Introduction}
%%%%%%%%%%%%%%%%%%%%%%%%%%%%%%%%%%%%%%%%%%%%%%%%%%%%%%%%%%%%%%%%%%%%%

A \emph{mass} in our sense is a finite measure on the Borel $\sigma$-algebra on~$\RR^d$ for which hyperplanes are zero-sets. It is \emph{cut by $h$ hyperplanes into~$2^h$ equal pieces} if all the~$2^h$ orthants given by the hyperplanes have the same measure. A triple of positive integers $(d,h,m)$ is \emph{admissible} if for any $m$ given masses in $\RR^d$ there exist $h$ hyperplanes that cut each of these masses into $2^h$ equal pieces. 

The famous Ham Sandwich Theorem states that $(d,h=1,m=d)$ is admissible, which is due to Banach \cite{Ste38}. The problem of finding other admissible tripes arose in the sixties. Gr\"unbaum \cite{Gru60} asked which triples of the form $(d,h=2^d,m=1)$ are admissible, and Hadwiger \cite{Had66} showed the admissibility of the triples $(d=3,h=1,m=3)$ and $(d=3,h=2,m=1)$. The general problem was posed by Ramos \cite{Ram96}. It admits interesting topological approaches, see e. g. \cite{Had66}, \cite{Ram96}, \cite{Ziv04}, \cite{MVZ06}, \cite[Chap. II]{Mat08}. It is seen as one of the prime model problems for equivariant algebraic topology; compare \cite[Sect. 3.1]{Mat03}.

If $(d,h,m)$ is admissible then so is $(d+1,h,m)$: Project the masses from $\RR^{d+1}$ to $\RR^d$, find cutting hyperplanes, and take their pre-image under the projection. Hence, let $\Delta(h,m)$ be the smallest dimension $d$ such that $(d,h,m)$ is admissible. The famous Ham Sandwich Theorem states $\Delta(h=1,m)=m$.

Here we present a new reduction, Lemma \ref{lemMassPartInductionOnMForDelta}. 

\section{Elementary Reductions}

The idea of the following Lemma \ref{lemMassPartInductionOnHForDelta} was already used by Hadwiger \cite{Had66} and by Ramos \cite{Ram96}.

\begin{lemma}
\label{lemMassPartInductionOnHForDelta}
$\Delta(h,m) \leq \Delta(h-1,2m)$, for all $h\geq 2$, $m\geq 1$. That is, if $(d,h-1,2m)$ is admissible then so is $(d,h,m)$.
\end{lemma}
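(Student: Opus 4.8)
The plan is to prove the stated implication directly. Write $d := \Delta(h-1,2m)$, so that $(d,h-1,2m)$ is admissible by definition of $\Delta$; I will show that $(d,h,m)$ is admissible, which immediately yields $\Delta(h,m)\leq d$. The mechanism, going back to Hadwiger and Ramos, is to spend one hyperplane on bisecting all $m$ masses simultaneously, thereby doubling the number of masses while reducing the number of remaining hyperplanes by one.

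First I would check that a single bisecting hyperplane is available, i.e. that $(d,1,m)$ is admissible, which amounts to $d\geq m$. By the Ham Sandwich Theorem $\Delta(1,m)=m$, and both arguments of $\Delta$ are monotone: more masses and more hyperplanes can only increase the required dimension. Concretely, deleting one of $h$ hyperplanes merges the $2^h$ equal orthants in pairs, so $(d,h,m)$ admissible implies $(d,h-1,m)$ admissible, whence $\Delta$ is nondecreasing in $h$; and simply ignoring extra masses shows it is nondecreasing in $m$. Therefore $d=\Delta(h-1,2m)\geq\Delta(1,m)=m$, and since admissibility is preserved under increasing the dimension (the projection argument from the introduction), $(d,1,m)$ is admissible.

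Now, given any $m$ masses $\mu_1,\dots,\mu_m$ on $\RR^d$, I would pick a hyperplane $H_1$ bisecting all of them, with closed halfspaces $H_1^{+},H_1^{-}$, and form the $2m$ restricted measures $\mu_i^{\pm}:=\mu_i|_{H_1^{\pm}}$. Each $\mu_i^{\pm}$ is again a mass in the required sense, since for any hyperplane $H$ one has $\mu_i^{\pm}(H)\leq\mu_i(H)=0$. Applying admissibility of $(d,h-1,2m)$ to these $2m$ masses produces $h-1$ hyperplanes $H_2,\dots,H_h$ cutting each $\mu_i^{\pm}$ into $2^{h-1}$ equal pieces.

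It remains to verify that $H_1,\dots,H_h$ cut every $\mu_i$ into $2^h$ equal orthants. An orthant is indexed by a sign vector $(\eps_1,\dots,\eps_h)\in\{+,-\}^h$; those with $\eps_1=+$ lie in $H_1^{+}$ and are precisely the $2^{h-1}$ orthants into which $H_2,\dots,H_h$ partition $\mu_i^{+}$, each therefore of measure $\tfrac{1}{2^{h-1}}\mu_i(H_1^{+})$, and symmetrically for $\eps_1=-$. Since $H_1$ bisects $\mu_i$, we have $\mu_i(H_1^{+})=\mu_i(H_1^{-})=\tfrac12\mu_i(\RR^d)$, so all $2^h$ orthants share the common measure $\tfrac{1}{2^h}\mu_i(\RR^d)$. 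The one point demanding genuine care is the availability of the initial bisecting hyperplane, which is exactly why establishing $d\geq m$ in the second step is essential; the measure bookkeeping of the final step is then routine.
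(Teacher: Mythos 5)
Your proof is correct and follows exactly the paper's argument: spend one Ham Sandwich hyperplane to bisect the $m$ masses into $2m$ masses, then apply admissibility of $(d,h-1,2m)$. You simply make explicit two points the paper leaves implicit, namely the justification that $d\geq m$ via monotonicity of $\Delta$ and the final orthant bookkeeping.
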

\begin{proof} Suppose that $(d,h-1,2m)$ is admissible and we are given $m$ masses in $\RR^d$. We can first bisect them with one hyperplane using the Ham Sandwich Theorem, since $d\geq m$. The resulting $2m$ masses can then be cut into equal parts using $h-1$ further hyperplanes, by the definition of $\Delta(h-1,2m)$.
\end{proof}

The next reduction will give new admissible triples.

\begin{lemma}
\label{lemMassPartInductionOnMForDelta}
$\Delta(h,m) \leq \Delta(h,m+1)-1$, for all $h,m\geq 1$. That is, if $(d+1,h,m+1)$ is admissible then so is $(d,h,m)$.
\end{lemma}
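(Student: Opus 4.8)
The plan is to deduce the admissibility of $(d,h,m)$ from that of $(d+1,h,m+1)$ by lifting the $m$ given masses into $\RR^{d+1}=\RR^d\times\RR$ as thin cylinders, adding one auxiliary mass to make up the count $m+1$, applying the hypothesis in $\RR^{d+1}$, and then letting the cylinders collapse back onto $\RR^d$. Concretely, given masses $\mu_1,\dots,\mu_m$ on $\RR^d$, for each small $\eps>0$ I would form the cylinder masses $\mu_i^\eps:=\mu_i\otimes u_\eps$, where $u_\eps$ is the normalized Lebesgue measure on $[-\eps,\eps]$, together with one fixed auxiliary mass $\nu$, the uniform measure on the unit ball $B\big((0,2),1\big)$ sitting above the hyperplane $H_0:=\RR^d\times\{0\}$. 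These are genuine masses (non-atomic in the $t$-direction, so hyperplanes stay zero-sets), so admissibility of $(d+1,h,m+1)$ yields, for every $\eps$, an arrangement $A^\eps=(H_1^\eps,\dots,H_h^\eps)$ cutting all $m+1$ masses into $2^h$ equal pieces.

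Next I would extract geometric information from $A^\eps$. Since an equipartition into $2^h$ equal orthants forces each single hyperplane to bisect each mass (each closed side is a union of $2^{h-1}$ orthants), every $H_j^\eps$ bisects the round ball $\nu$, and a hyperplane bisects a round ball precisely when it meets the centre; hence every $H_j^\eps$ passes through $(0,2)$. Anchored at this common point, $H_j^\eps$ is determined by its unit normal in $S^d$, so the arrangements live in the compact space $(S^d)^h$, and I would pass to a subsequence $\eps_k\to 0$ along which $A^{\eps_k}$ converges to a limit arrangement $A=(H_1,\dots,H_h)$, still through $(0,2)$. I then claim each $H_j$ is non-horizontal, i.e. its normal is not $\pm e_{d+1}$: otherwise $H_j=\{t=2\}$, and choosing a ball $B_R\subset\RR^d$ carrying more than $\tfrac34$ of $\mu_i$ one checks that for large $k$ the whole of $\mu_i^{\eps_k}$ over $B_R$ lies on one side of $H_j^{\eps_k}$, so $H_j^{\eps_k}$ cannot bisect $\mu_i^{\eps_k}$ — contradicting the previous sentence. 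Thus each $H_j$ meets $H_0$ in an honest hyperplane $\bar H_j:=H_j\cap H_0$ of $\RR^d$.

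Finally I would pass the equipartition to the limit. As $\eps_k\to0$ the masses $\mu_i^{\eps_k}$ converge weakly to $\mu_i$ concentrated on $H_0$, and for each sign pattern $\sigma\in\{+,-\}^h$ the indicator of the orthant $O_\sigma^{\eps_k}$ of $A^{\eps_k}$ converges, at every $x\in\RR^d$ off $\bigcup_j\bar H_j$, to the indicator of the orthant $\bar O_\sigma$ of the projected arrangement $(\bar H_1,\dots,\bar H_h)$; since hyperplanes are $\mu_i$-zero-sets this holds $\mu_i$-almost everywhere, and dominated convergence gives $\mu_i(\bar O_\sigma)=2^{-h}\mu_i(\RR^d)$. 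Hence $(\bar H_1,\dots,\bar H_h)$ cuts each $\mu_i$ into $2^h$ equal pieces, which proves $(d,h,m)$ admissible.

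The main obstacle is controlling the degeneration of the arrangements in the limit: a priori the hyperplanes could escape to infinity or flatten to horizontal, and in either case the projection to $H_0$ breaks down. The purpose of the auxiliary ball $\nu$ is exactly to forbid this — it pins every hyperplane through the fixed point $(0,2)$, giving compactness of the arrangement space, and, being lifted off $H_0$, it is what makes the non-horizontality argument bite. The one delicate technical point is the interchange of limit and measure in the last paragraph for masses $\mu_i$ that are neither compactly supported nor absolutely continuous; here the standing hypothesis that hyperplanes are zero-sets for the masses is precisely what rescues the dominated-convergence step.
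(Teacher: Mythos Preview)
Your proof is correct and follows essentially the same approach as the paper's: thicken the $m$ masses into $\RR^{d+1}$, add a ball as an $(m{+}1)$st mass so that every equipartitioning hyperplane is forced through its centre, then project back to $\RR^d$ and take a limit as the thickening parameter tends to zero. Your write-up is in fact more careful than the paper's sketch --- you make explicit why each hyperplane individually bisects the ball, why the limit arrangement cannot contain a horizontal hyperplane, and how dominated convergence using the zero-set hypothesis justifies the passage to the limit, all of which the paper leaves implicit.
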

\begin{proof}
Assume we are given $m$ masses in $\RR^{\Delta(h,m+1)-1}$, which is embedded in $\RR^{\Delta(h,m+1)}$ with last coordinate equal to zero. Add a ball with radius $\frac{1}{2}$ at the point $(0,\ldots,0,1)\in\RR^{\Delta(h,m+1)}$ and view it as another mass. Thicken the first $m$ masses by an $\eps>0$ into the direction of the last coordinate, such that they are now actually masses in $\RR^{\Delta(h,m+1)}$ (the thickened masses have in fact the property that hyperplanes are zero-sets). 

\begin{figure}[h]
\centering 
\input{massPartInductionOnMandD.pstex_t}
\\ An example for $m=1$ and $h=2$.
\end{figure}

We then find an equipartition of the $m+1$ masses by $h$ hyperplanes. All of the hyperplanes go through the point $(0,\ldots,0,1)$, therefore they intersect $\RR^{\Delta(h,m+1)-1}$ in hyperplanes of $\RR^{\Delta(h,m+1)-1}$. These yield an equipartition of the given $m$ masses up to a small error which depends on the chosen $\eps$. A limit argument finishes the proof (take a convergent subsequence).
\end{proof}

\section{Result}

E. Ramos \cite{Ram96} has shown among other things the following inequalities ($x\geq 0$): 
$\Delta(1,2^{x+1})\leq 2\cdot 2^x$, $\Delta(2,2^{x+1})\leq 3\cdot 2^x$, $\Delta(3,2^{x+1})\leq 5\cdot 2^x$, $\Delta(4,2^{x+1})\leq 9\cdot 2^x$, $\Delta(5,2^{x+1})\leq 15\cdot 2^x$. Note that the factors in front of $2^x$ are of the form $2^{h-1}+1$, except for $h=5$ it is $15=2^{h-1}-1$. 

His formula $\Delta(5,2^{x+1})\leq 15\cdot 2^x$ gives together with the above lemmas new admissible triples. 
\begin{corollary}
\label{corNewResults}
Let $h\geq 5$, $m\geq 2$ and write $m=2^q+r$ $(0<r\leq 2^q)$. Then
\[
\Delta(h,m)\leq 2^{h-5}(14\cdot 2^q+r).
\]
\end{corollary}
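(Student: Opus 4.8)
The plan is to combine the two elementary reductions with Ramos's inequality $\Delta(5,2^{x+1})\le 15\cdot 2^x$. The strategy is to use Lemma~\ref{lemMassPartInductionOnHForDelta} to bring the number of hyperplanes down to $5$, and only then to use Lemma~\ref{lemMassPartInductionOnMForDelta} to pad the number of masses up to a power of two, so that Ramos's bound becomes applicable. The crux will be choosing the correct power of two and, above all, performing the two reductions in the right order.

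Concretely, I would first iterate Lemma~\ref{lemMassPartInductionOnHForDelta} exactly $h-5$ times (legitimate since the smallest level at which it is invoked is $6\ge 2$). Each step lowers $h$ by one and doubles $m$, so $\Delta(h,m)\le\Delta(5,2^{h-5}m)$. Writing $N=2^{h-5}m=2^{h-5}(2^q+r)$ and using $2^q<m\le 2^{q+1}$, the smallest power of two that is at least $N$ is $2^{h-4+q}$; I set $x=h-5+q\ge 0$, so that $2^{x+1}=2^{h-4+q}\ge N$. Iterating Lemma~\ref{lemMassPartInductionOnMForDelta} exactly $k:=2^{x+1}-N=2^{h-5}(2^q-r)$ times (here $0<r\le 2^q$ is precisely what guarantees $k\ge 0$) yields $\Delta(5,N)\le\Delta(5,2^{x+1})-k$. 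Applying Ramos's formula $\Delta(5,2^{x+1})\le 15\cdot 2^{x}=15\cdot 2^{h-5+q}$ and substituting the value of $k$ gives
\[
\Delta(h,m)\ \le\ 15\cdot 2^{h-5+q}-2^{h-5}(2^q-r)\ =\ 2^{h-5}\bigl(14\cdot 2^q+r\bigr),
\]
which is the claimed bound.

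The step I expect to be the real obstacle — and the only genuinely delicate point — is the \emph{order} in which the two reductions are applied. Padding the masses up to a power of two at the top level $h$ before reducing $h$ would be wasteful: one unit of padding buys back only one unit of the final bound through the $-1$ credit of Lemma~\ref{lemMassPartInductionOnMForDelta}, whereas the subsequent $h-5$ doublings inflate that same unit in Ramos's argument by a factor $2^{h-5}$. Carrying out the padding at the bottom level $h=5$, where the $+1$ increment to $m$ and the $-1$ credit live on the same scale as Ramos's bound, is exactly what turns the leading constant $15\cdot 2^q$ from a naive estimate into the sharper $14\cdot 2^q+r$. Once this ordering is fixed, the remaining bookkeeping — identifying the right power of two and verifying $x\ge 0$ and $k\ge 0$ from the hypotheses $h\ge 5$ and $0<r\le 2^q$ — is routine.
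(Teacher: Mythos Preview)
Your argument is correct and is essentially the paper's own proof. The paper phrases it as ``first verify the inequality for $h=5$ with Lemma~\ref{lemMassPartInductionOnMForDelta}, then prove it for larger $h$ with Lemma~\ref{lemMassPartInductionOnHForDelta}'', which is exactly your chain $\Delta(h,m)\le\Delta(5,2^{h-5}m)\le 15\cdot 2^{h-5+q}-2^{h-5}(2^q-r)$ read from the other end; the arithmetic and the use of Ramos's bound are identical. Your extended commentary on the ordering is accurate but not strictly needed for the proof---the paper dispatches the whole thing in one line.
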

\begin{proof}
First verify the inequality for $h=5$ with Lemma \ref{lemMassPartInductionOnMForDelta}, then prove it for larger $h$ with Lemma \ref{lemMassPartInductionOnHForDelta}.
\end{proof}

We note that Ramos' mentioned results together with the above reduction lemmas give all known admissible triples, except for one, $\Delta(2,5)=8$, which is due to Mani-Levitska, \vrecica, and {\zivaljevic} \cite{MVZ06}. 

\section*{Acknowledgements}

I thank Julia Ruscher and G\"unter Ziegler for their support.

\vspace{0.5cm}
\noindent
\SMALL{Benjamin Matschke\\
Institut f\"ur Mathematik, MA 6--2\\
Technische Universit\"at Berlin, 10623 Berlin, Germany\\
benjaminmatschke@googlemail.com\\
}
\end{document}